\documentclass[11pt,final]{amsart}
\usepackage[utf8]{inputenc}
\usepackage[T1]{fontenc}
\usepackage[letterpaper,centering]{geometry}
\usepackage{lmodern}
\usepackage{eucal}
\usepackage{mathrsfs}
\usepackage{enumerate}
\usepackage{amsmath,amssymb,amsfonts}
\usepackage{xcolor}
\definecolor{darkblue}{rgb}{0,0,0.3}
\definecolor{darkgreen}{rgb}{0,0.4,0}
\usepackage[colorlinks=true,linkcolor=darkblue, citecolor=darkgreen, unicode,pdfborder={0 0 0}]{hyperref}
\usepackage[ps,arrow,matrix,tips,line, curve]{xy}
{\setbox0\hbox{$ $}}\fontdimen16\textfont2=\fontdimen17\textfont2
\entrymodifiers={+!!<0pt,\the\fontdimen22\textfont2>}
\SelectTips{cm}{11}
\usepackage{enumitem}
\setlist[enumerate]{label={\upshape(\arabic*)},topsep=.7ex, leftmargin=*}
\setlist[itemize]{leftmargin=*}

\usepackage{ifdraft}
\ifdraft{
	\usepackage[notcite,notref]{showkeys}
	\addtolength{\hoffset}{1.5cm}
	
}{}

\usepackage{etoolbox}

\theoremstyle{plain}

\newtheorem{thm}{Theorem}[section]

\newtheorem{conjecture}[thm]{Conjecture}

\newtheorem{lem}[thm]{Lemma}
\newtheorem{lemma}[thm]{Lemma}

\newtheorem{prop}[thm]{Proposition}

\theoremstyle{definition}

\newtheorem{definition}[thm]{Definition}

\newtheorem{example}[thm]{Example}

\numberwithin{equation}{section}

\input cyracc.def
\DeclareFontFamily{U}{russian}{}
\DeclareFontShape{U}{russian}{m}{n}
{ <5><6> wncyr5
	<7><8><9> wncyr7
	<10><10.95><12><14.4><17.28><20.74><24.88> wncyr10 }{}
\DeclareSymbolFont{Russian}{U}{russian}{m}{n}
\DeclareSymbolFontAlphabet{\mathcyr}{Russian}
\makeatletter
\let\@math@cyr\mathcyr
\renewcommand{\mathcyr}[1]{\@math@cyr{\cyracc #1}}
\makeatother
\newcommand{\Sha}{{\mathcyr{Sh}}}

\makeatletter
\def\myrightarrow{{\setbox\z@\hbox{$\rightarrow$}\dimen0\ht\z@\multiply\dimen0 6\divide\dimen0 10\ht\z@\dimen0\box\z@}}
\def\myrightarrowfill@{\arrowfill@\relbar\relbar\myrightarrow}
\def\myleftarrow{{\setbox\z@\hbox{$\leftarrow$}\dimen0\ht\z@\multiply\dimen0 6\divide\dimen0 10\ht\z@\dimen0\box\z@}}
\def\myleftarrowfill@{\arrowfill@\myleftarrow\relbar\relbar}
\newcommand{\myxrightarrow}[2][]{\ext@arrow 0359\myrightarrowfill@{#1}{#2}}
\newcommand{\myxleftarrow}[2][]{\ext@arrow 3095\myleftarrowfill@{#1}{#2}}
\makeatother
\newcommand{\mtilde}{{\mathchoice
		{\widetilde{m}}
		{\widetilde{m}}
		{\rlap{$\scriptscriptstyle{m}$}\vphantom{\raise0pt\hbox{$m$}}\smash{\lower2.5pt\hbox{$\scriptscriptstyle\widetilde{\phantom{\scriptscriptstyle{m}}}$}}}
		{\rlap{$\scriptscriptstyle{m}$}\vphantom{\raise.2pt\hbox{$m$}}\smash{\lower2.05pt\hbox{$\scriptscriptstyle\widetilde{\phantom{\scriptscriptstyle{m}}}$}}}}}

\newcommand{\Mtilde}{{\mathchoice
		{\rlap{$M$}\mkern1mu\smash[b]{\lower.5pt\hbox{$\widetilde{\phantom{M}}$}}\mkern-1mu}
		{\rlap{$M$}\mkern1mu\smash[b]{\lower.5pt\hbox{$\widetilde{\phantom{M}}$}}\mkern-1mu}
		{\rlap{$\scriptstyle{M}$}\mkern1mu\smash[b]{\lower.5pt\hbox{$\widetilde{\phantom{\scriptstyle{M}}}$}}\mkern-1mu}
		{\widetilde{M}}}}

\newcommand{\A}{{\mathbf A}}

\ifdef{\C}%
{{\renewcommand{\A}{{\mathbb{A} }}}
\renewcommand{\C}{{\mathbf C}}}%
{\newcommand{\C}{{\mathbf C}}}

\newcommand{\cF}{\mathrm F}

\newcommand{\cFplus}{\mathrm F_+}
\newcommand{\cFconst}{\mathrm F_\const}

\newcommand{\const}{\mathrm{const}}

\newcommand{\Gal}{\mathrm{Gal}}

\newcommand{\Pic}{\mathrm{Pic}}
\newcommand{\Br}{\mathrm{Br}}

\newcommand{\Div}{\mathrm{Div}}
\renewcommand{\phi}{\varphi}
\renewcommand{\emptyset}{\varnothing}

\newcommand{\Hom}{{\mathrm{Hom}}}

\newcommand{\chapeau}{{\rlap{\smash{\hbox{\lower4pt\hbox{\hskip1pt$\widehat{\phantom{u}}$}}}}}}
\newcommand{\Picplushat}{\Pic_+^{{\smash{\hbox{\lower4pt\hbox{\hskip0.4pt$\widehat{\phantom{u}}$}}}}}}
\newcommand{\PicplusAhat}{\Pic_{+,\A}^{{\smash{\hbox{\lower4pt\hbox{\hskip.4pt$\widehat{\phantom{u}}$}}}}}}

\newcommand{\Pichat}{\Pic^{{\smash{\hbox{\lower4pt\hbox{\hskip0.4pt$\widehat{\phantom{u}}$}}}}}}

\ifdef{\G}%
{\renewcommand{\G}{{\mathcal{G}}}}%
{\newcommand{\G}{{\mathcal{G}}}}

\newcommand{\coker}{\mathrm{coker}}

\makeatletter
\renewcommand{\tocsection}[3]{%
	\indentlabel{\@ifnotempty{#2}{\bfseries\ignorespaces#1 #2\quad}}\bfseries#3}

\renewcommand{\tocsubsection}[3]{%
	\indentlabel{\@ifnotempty{#2}{\hspace{1.6em}\ignorespaces#1 #2\quad}}#3}
\makeatother

\makeatletter\let\@wraptoccontribs\wraptoccontribs\makeatother

\usepackage{threeparttable}
\usepackage{threeparttable, tablefootnote}

\newcommand\Deltabar{{\overline{\Delta}}}
\newcommand\sigmabar{{\overline{\sigma}}}

\newcommand\That{{\widehat{T}}}

\newcommand\ZZ{\mathbb{Z}}
\newcommand\QQ{\mathbb{Q}}
\newcommand\sm{\mathrm{sm}}
\usepackage{soul}
\begin{document}

\title[Arithmetic purity for toric varieties]
{Arithmetic purity of strong approximation for toric varieties with constant global sections}

\author{Dasheng Wei}

\address{Academy of Mathematics and System Science, CAS, Beijing 100190,
  P.\ R.\ China \emph{and} School of mathematical Sciences, University of  CAS, Beijing
  100049, P.\ R.\ China}

\email{dshwei@amss.ac.cn}

\author{Fei Xu}

\address{School of Mathematical Sciences, Capital Normal University, Beijing 100048, P. \ R. \ China}

\email{6091@cnu.edu.cn}

\author{Yi Zhu}

\address{Bethesda, MD, USA 20817}

\email{math.zhu@gmail.com}

\date{July 8, 2026}

\begin{abstract} Let $X$ be a smooth toric variety over a number field $k$ such that the global sections of $X$ are constant. We show that $U(k)$ is dense in $U(\A_k)^{\Br_1 (U)}$ for any open subset $U$ of $X$ such that the codimension of $X\setminus U$ in $X$ is at least 2.
\end{abstract}

\subjclass[2010]{14G05 (11D57, 14F22)}

% 14G05: Algebraic geometry->Arithmetic problems. Diophantine geometry->Rational points
%
% 11D57: Number theory->Diophantine equations->Multiplicative and norm form equations
%
% 14F22: Algebraic geometry->(Co)homology theory->Brauer groups of schemes

\maketitle

%\tableofcontents

\section{Introduction}

It is well-known that weak approximation of smooth varieties over a number field is birationally invariant. However, this is no longer true for strong approximation of smooth varieties. Based on various geometric phenomena, the following arithmetic purity of strong approximation can be expected, which was proposed by Wittenberg in \cite[Question 2.11]{witten2015}.

\begin{conjecture}
     If $X$ is a smooth variety over number field $k$ satisfying strong approximation off a finite set of primes $S$ of $k$, whether any open subset $U$ of $X$ over $k$ with ${\rm{codim}}(X\setminus U, X)\geq 2$ also satisfies strong approximation off $S$?
\end{conjecture}

The first example to support this conjecture is an affine space $\mathbb A^n$, which was independently verified by Cao and Xu in \cite[Proposition 3.4]{CaoXu2018} and Wei in \cite[Lemma 2.1]{weitorus}. Later, this conjecture has been proved for semi-simple simply connected quasi-split linear algebraic groups and the related homogeneous spaces by Cao, Liang and Xu in \cite{CaoLiangXu2019}. More general conjecture involved Brauer-Manin obstruction is also proposed in \cite{CaoLiangXu2019}. In \cite{CaoHuang2020}, Cao and Huang further extend the quasi-split case to the isotropic case. Since weak approximation is the same as strong approximation for proper smooth varieties, the arithmetic purity conjecture indicates that any open sub-varieties of the proper variety with weak approximation by removing a closed subset of codimesion $\geq 2$ should satisfy strong approximation off $\emptyset$. Indeed, Chen showed such the arithmetic purity of strong approximation for certain proper toric varieties in \cite{Chen24}. In this paper, we will extend this result to all proper toric varieties.

Another motivation of this paper is to understand the recent work \cite{sant} of Santens. Using harmonic analysis on universal torsors, he proved that a smooth toric variety with constant global sections satisfies strong approximation off $\emptyset$. By studying some geometric properties of toric varieties, we will show that a smooth toric variety with constant global sections satisfies the arithmetic purity of strong approximation off $\emptyset$, which is the main result of this paper.

\begin{thm}\label{main} Let $X$ be a smooth toric variety over a number field $k$ with $H^0(X, \mathcal O_X)=k$. If $U$ is an open subset of $X$ such that the codimension of $X\setminus U$ in $X$ is at least 2, then $U(k)$ is dense in $U(\A_k)^{\Br_1(U)}$.
\end{thm}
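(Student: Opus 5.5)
We would reduce the statement to the universal torsor of $X$. Write $X$ as the toric variety attached to a fan $\Sigma$ in $N_{\bar k}$ with torus $T$. The Cox construction gives a universal torsor $\pi\colon \mathcal{T}\to X$ under the Néron–Severi torus $T_{NS}$ (dual to $\Pic \bar X$). Here $\mathcal{T}$ is the open subset of the affine space $\A(\Sigma(1))=R_{K/k}$-type quasi-trivial space $\mathbb{A}$ given by the complement of the irrelevant locus $Z$. The hypothesis $H^0(X,\mathcal O_X)=k$ means that the rays span $N_{\QQ}$ in a way that leaves no nonconstant invariant regular functions. Equivalently, the units and regular functions on $\mathcal{T}$ are constant up to the torus action. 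Smoothness of $X$ gives a free action, and because $T_{NS}$ is a split-after-extension quasi-trivial-dual torus, $H^1(k,T_{NS})$ behaves well.

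The first step is the standard descent: since $\Pic \bar U=\Pic\bar X$ and $\bar k[U]^\times=\bar k^\times$ (codimension $\ge 2$), every point of $U(\A_k)^{\Br_1(U)}$ lifts, after twisting, to an adelic point of some twist $\mathcal{T}_\tau|_U=\pi_\tau^{-1}(U)$. Here $\tau$ ranges over $H^1(k,T_{NS})$. This is the Colliot-Thélène–Sansuc/Harari–Skorobogatov descent for $\Br_1$, valid for open varieties with these conditions. Moreover $\pi$ is surjective on adelic points up to the finitely many twists, so density of $U(k)$ follows from density of $\mathcal{T}_\tau|_U(k)$ in $\mathcal{T}_\tau|_U(\A_k)$ for each twist.

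The second step identifies these twists. $T_{NS}$ is quasi-trivial up to the character lattice of $T$: there is an exact sequence $0\to M\to \ZZ[\Sigma(1)]\to \Pic\bar X\to 0$. The twist $\mathcal{T}_\tau$ is then an open subset of a twisted affine space, which is again isomorphic to $\mathbb{A}^n_k$ because $\ZZ[\Sigma(1)]$ is a permutation module and Weil restrictions of affine spaces are affine spaces. Its complement is $Z_\tau\cup\pi_\tau^{-1}(X\setminus U)$.

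The third step is the key codimension check. $Z$ has codimension $\ge 2$ in $\mathbb{A}$ because every ray lies in a cone, and $\pi$ is smooth, so $\pi^{-1}(X\setminus U)$ has codimension $\ge 2$ in $\mathcal{T}$. Hence $\mathcal{T}_\tau|_U$ is an open subset of affine space with complement of codimension $\ge 2$. By the arithmetic purity for affine space \cite[Proposition 3.4]{CaoXu2018}, \cite[Lemma 2.1]{weitorus}, such a subset satisfies strong approximation off $\emptyset$.

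The main obstacle is the first step, where the hypothesis $H^0=k$ is genuinely used. Strong approximation off $\emptyset$ requires controlling the archimedean place and surjectivity of $\pi_\tau(\A_k)$ onto the adelic points of $U$ integrally at almost all places. This needs integral models: we must show that for almost all $v$ an integral point of $U$ lifts to an integral point of some twist, using the vanishing of $H^1$ of the torus over the ring of integers of $k_v$ (Lang's theorem). Beyond that we need finiteness of the relevant set of twists, and the constancy of global sections, which guarantees $\bar k[\mathcal{T}]^\times=\bar k^\times$ so that no extra obstruction appears. We would try to handle this by working with the smooth toric $\mathcal{O}_S$-models and Hensel's lemma. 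If that fails, we would instead compare with Santens' strong approximation result for $X$ and run his torsor argument on $U$ directly.
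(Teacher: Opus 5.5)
Your first two steps and your codimension count are essentially fine: the twisting cocycle becomes a coboundary in the quasi-trivial torus with character module $\ZZ[\Sigma(1)]$, so every twist $\mathcal T_\tau|_U$ is an open subset of $\mathbb{A}^n_k$ with complement of codimension $\ge 2$. The proof breaks at the last step. Such an open subset $V\subset\mathbb{A}^n_k$ does \emph{not} satisfy strong approximation off $\emptyset$. Already $V=\mathbb{A}^n_k$ fails, because $k$ is discrete in $\A_k$. Removing a codimension-$2$ subset does not help, since the coordinate functions remain nonconstant regular functions on $V$ and send $V(k)$ into the discrete set $k\subset\A_k$. The results of Cao--Xu and Wei that you cite give strong approximation off a \emph{nonempty} finite set of places.

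A sanity check shows the technical points you list as the ``main obstacle'' (integral lifting via Lang's theorem, finiteness of twists) cannot repair this. Your argument never actually uses $H^0(X,\mathcal O_X)=k$. The use you state is mistaken: $\bar k[\mathcal T]^\times=\bar k^\times$ holds for the Cox torsor of every toric variety, because the irrelevant locus always has codimension $\ge 2$. So the same argument would prove strong approximation off $\emptyset$ for $X=U=\mathbb{A}^n_k$, whose universal torsor is $\mathbb{A}^n_k$ itself and where $\Br_1(\mathbb{A}^n_k)=\Br(k)$. That conclusion is false.

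The missing idea is how to recover the one place that approximation on the torsor cannot control. The hypothesis $H^0(X,\mathcal O_X)=k$ enters exactly here. It says the ray generators positively span $N_{\mathbb R}$, so there is a Galois-invariant relation $\sum_\rho n_\rho\varrho=0$ with all $n_\rho<0$. Equivalently, there is a $k$-rational cocharacter of the N\'eron--Severi group acting with weights of one sign on all Cox coordinates.

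The paper first passes to the divisorial open $Y$, which has the same $\Br_1$ and the same global sections, and uses Proposition~\ref{dense}. In effect it then partially compactifies the Cox torsor of $Y$ along this cocharacter. Adding the ray $u_0=\sum_\rho n_\rho e_\rho$ to the rays $e_\rho$ gives the divisorial part $U$ of a weighted projective space. The map $f\colon U\to Y$ extends the torsor map and sends the new divisor into $T$. This $U$ has only constant global functions, and $U$ minus a codimension-$2$ subset does satisfy strong approximation off $\emptyset$ (Lemma~\ref{lem:weighted}, which reduces to $\mathbb{P}^{d-1}$).

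The Brauer--Manin condition is then handled not by twisting torsors but by the exact sequence $0\to\Br_1(Y)\to\Br_1(T)\to\Br_a(T')$, together with $\Sha^1(k,T')=0$ and the Cao--Xu result. These move a translate $y^{-1}W$, with $y\in T(k)$, into the image of $U(\A_k)$. Without some such device exploiting the positive relation, your reduction stops one place short.
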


Notation and terminology are standard if not explained. Let $k$ be a field and $\bar k$ be a fixed algebraic closure of $k$. For a scheme $X$ over $k$, we write $X_{\bar k} =X\times_k \bar k$ and
$$ \Br(X)= H_{et}^2(X, \mathbb{G}_m) \ \ \ \text{and} \ \ \ \Br_1(X)= \ker (\Br(X) \longrightarrow \Br(X_{\bar k})) $$
and $$\Br_a(X)= \coker( \Br(k)\longrightarrow \Br_1(X)) $$ with respect to the structure morphism. A variety over $k$ means a separated and integral scheme of finite type over $k$. For a variety $X$ over $k$, we use $X^{\sm}$ to denote the smooth locus of $X$ over $k$. When $k$ is a number field, we write $\A_k$ for the adeles of $k$ and $X(\A_k)$ for adelic points of $X$. Let
$$ X(\A_k)^{B}= \{ (x_v)_v\in X(\A_k): \ \sum_{v} inv_v(\xi(x_v)) =0 \in \mathbb{Q}/\mathbb{Z} , \  \forall \xi\in B\}  $$ be a closed subset of $X(\A_k)$ with the adelic topology for any subgroup $B$ of $\Br(X)$. We say that a variety $X$ over a number field $k$ satisfies strong approximation with algebraic Brauer-Manin obstruction off $\emptyset$ if $X(k)$ is dense in $X(\A_k)^{\Br_1(X)}$ with adelic topology.

\section{Some results for toric varieties}
In this section, we provide some results for toric varieties over an arbitrary field, which we need in the next section. Some basic results and concepts about toric varieties have been summarized in \cite[\S 2]{Chen24}. First, we recall the following concept given in \cite[Definition 2.5]{CaoXu2018}.

\begin{definition} A toric variety $X$ over $k$ is called divisorial if all minimal orbits of the torus are divisors of $X$. Equivalently, the dimension of any cone in the fan of X is strictly less than 2.
\end{definition}

Since a normal toric variety $X$ over $k$ corresponds a fan $\Delta$ which is stable under the action of $\Gal(\bar k/k)$, the sub-fan of $\Delta$ consisting of all 1-dimensional cones and 0 which is also stable under the action of $\Gal(\bar k/k)$ corresponds the unique open toric divisorial subvariety $Y$ over $k$ such that $X\setminus Y$ has codimension at least 2 in $X$. It should be pointed out that a divisorial toric varieties is automatically smooth by \cite[Theorem 3.1.19]{CoxLS11}.

%The explicit description of toric varieties with constant global section is given by \cite[Exercise 4.3.4]{CoxLS11} or .

\begin{lem}\label{cpt} If $U$ is a smooth divisorial toric variety over $\bar k$ such that $H^0(U, \mathcal O_U) = \bar k$, then $U$ has a normal toric compactification $X$ such that $X\setminus U$ has codimension at least 2 in $X$.
\end{lem}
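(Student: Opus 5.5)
The plan is to translate everything into the combinatorics of fans and then build a complete fan by the face-fan construction of a polytope. Let $N$ be the cocharacter lattice of the torus, $M$ its dual, and $N_\mathbb{R}=N\otimes\mathbb{R}$. Since $U$ is divisorial, its fan $\Delta_U$ consists of $\{0\}$ and finitely many distinct rays $\rho_1,\dots,\rho_r$ with primitive generators $v_1,\dots,v_r\in N$. For a normal toric variety, $H^0(U,\mathcal O_U)=\bigoplus \bar k\,\chi^m$, where $m$ runs over $m\in M$ with $\langle m,v_i\rangle\ge 0$ for all $i$. So $H^0(U,\mathcal O_U)=\bar k$ says exactly that the only such $m$ is $0$. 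By duality of cones this is equivalent to the cone $\sum_i\mathbb{R}_{\ge0}v_i$ being all of $N_\mathbb{R}$. The dual of that cone is $\{0\}$, and rationality of the $v_i$ lets us pass from lattice points to real points, since the dual cone is rational.

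Next I will construct the compactification. Choose a Euclidean norm on $N_\mathbb{R}$ and set $w_i=v_i/\|v_i\|$; these are $r$ distinct points on the unit sphere. Let $P=\mathrm{conv}(w_1,\dots,w_r)$. Two facts are needed.
\begin{itemize}
\item Every $w_i$ is a vertex of $P$. A point of the unit sphere cannot be a nontrivial convex combination of other points of the closed unit ball, by strict convexity of the Euclidean ball.
\item The origin lies in the interior of $P$. If it did not, a separating hyperplane would give a nonzero linear form $\ell$ with $\ell(w_i)\ge0$ for all $i$. Then $\ell\ge 0$ on $\sum\mathbb{R}_{\ge0}v_i=N_\mathbb{R}$, which is absurd.
\end{itemize}

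Let $\Delta$ be the face fan of $P$: the cones $\mathrm{cone}(F)$ for $F$ ranging over the proper faces of $P$, together with $\{0\}$. Because $0\in\mathrm{int}(P)$, each such cone is strongly convex, these cones form a fan, and their union is $N_\mathbb{R}$, so $\Delta$ is complete. Each cone is generated by some of the $w_i$, hence by some of the $v_i$, so it is rational. The one-dimensional cones of $\Delta$ are the cones over vertices of $P$, which by the first fact are exactly $\rho_1,\dots,\rho_r$. Hence $\Delta_U\subset\Delta$ is a subfan, and the associated complete normal toric variety $X=X_\Delta$ contains $U$ as the open toric subvariety for $\Delta_U$.

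Finally, $X\setminus U$ is the union of the torus orbits $O(\sigma)$ for $\sigma\in\Delta$ with $\dim\sigma\ge2$. Since $\dim O(\sigma)=n-\dim\sigma$, the complement has codimension at least $2$. Note that $\Delta$ need not be simplicial, so $X$ may be singular; the lemma only asks for normality, and $X$ is normal because it comes from a fan.

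The main point needing care is the second step: ensuring that no ray of $U$ gets "swallowed", so that every $v_i$ spans a face of the fan. This is why the $v_i$ are normalized onto a sphere rather than taking the convex hull of the $v_i$ themselves, since some $v_i$ could otherwise fail to be vertices.
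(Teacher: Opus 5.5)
Your proposal is correct and follows the paper's proof essentially step by step: you translate $H^0(U,\mathcal O_U)=\bar k$ into the condition that the rays span $N_\mathbb{R}$ as a cone, normalize the primitive generators onto the unit sphere so that strict convexity makes each one a vertex of their convex hull $P$, and take the fan of cones over proper faces of $P$, whose cones of dimension $\ge 2$ give a complement of codimension $\ge 2$. Your separation argument for $0\in\mathrm{int}(P)$ and your remark that the cones are rational make explicit two points the paper states without detail.
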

\begin{proof}  Let $T$ be the torus of $U$, $\hat{T}$ be the characters of $T$ and $N=\Hom_{\mathbb Z}(\hat T, \mathbb Z)$. Write $\Delta_U$ to be the associated fan of $U$.
Since $U$ is divisorial, the fan $\Delta_U$ consists of 1-dimensional cones (rays) and 0. Let $\{v_1, \cdots, v_d\}$ be the set of primitive generators of all rays in $\Delta_U$ and
$$ C= \mathbb{R}_{\geq 0} v_1 + \cdots + \mathbb{R}_{\geq 0} v_d \subseteq N \otimes_{\mathbb Z} \mathbb R $$ be a cone generated by $\{v_1, \cdots, v_d\}$. Since
$$\bar k= H^0(U\times_k \bar k, \mathcal O_{U_{\bar k}}) = \bigcap_{i=1}^d \bar k[\hat T \cap v_i^{\vee} ] = \bar k[\hat T\cap C^{\vee}]$$ by \cite[Theorem 1.2.18 and Theorem 3.1.7]{CoxLS11} (see also \cite[Lemma 2.1]{CaoXu2018}), we have $$\hat T\cap C^{\vee}=\{ \chi \in \hat T: \ v_i(\chi)\geq 0 \ \text{for} \  1\leq i\leq d \} =0 .$$
Then $C^{\vee}=0$. Therefore $ H^0(U, \mathcal O_U) = \bar k $ if and only if $C=N \otimes_{\mathbb Z} \mathbb R$.

\smallskip

Fixing a basis $\{e_1, \cdots, e_r\}$ of $N$ over $\mathbb Z$, we can define a norm
$$\parallel x \parallel = \sqrt{a_1^2+ \cdots + a_r^2};  \ \ \ \forall\ x=\sum_{i=1}^r a_i e_i \in  N\otimes_{\mathbb Z} \mathbb R $$ on $N\otimes_{\mathbb Z} \mathbb R$. Let $u_i= \frac{v_i}{\parallel v_i \parallel}$ for $1\leq i\leq d$ and write
$$ P= \{ \sum_{i=1}^d \lambda_i u_i: \ \lambda_i \geq 0 \ \ \text{and} \ \  \sum_{i=1}^d \lambda_i =1 \} \subset N\otimes_{\mathbb Z} \mathbb R . $$
Since $C=N \otimes_{\mathbb Z} \mathbb R$, the point $0$ is in the interior of $P$. Moreover, since the unit ball is strictly convex, the points $ u_1, \cdots, u_d $ are all vertices of $P$.

Define $\Delta$ to consist of all possible $\text{cone}(F)$
$$ \text{cone}(F)= \mathbb{R}_{\geq 0} v_{i_1} + \cdots + \mathbb{R}_{\geq 0} v_{i_k} $$ where $F=\{ u_{i_1}, \cdots, u_{i_k} \}$ is the vertices of a proper face of $P$.
Since each ray $\mathbb{R}_{\geq 0} u_i=\mathbb{R}_{\geq 0} v_i$ only intersects $P$ at $u_i$ for $1\leq i\leq d$, we have
$$ \text{cone}(F_1)\cap \text{cone}(F_2)= \text{cone}(F_1\cap F_2) .$$
This implies that $\Delta$ is a fan. Let $X$ be the toric variety associated to $\Delta$. Then $U$ is an open toric sub-variety. Since
$$|\Delta|= \bigcup_{F\in \Delta} F= C= N \otimes_{\mathbb Z} \mathbb R, $$ the variety $X$ is a toric compactification of $U$ by \cite[Theorem 3.4.6]{CoxLS11}.
Since all cones in $\Delta$ are strongly convex by \cite[Proposition 1.2.12]{CoxLS11}, the toric variety $X$ is normal by \cite[Theorem 1.3.5]{CoxLS11}. Moreover,
since $\Delta\setminus \Delta_U$ consists of cones of dimension larger than 1, we conclude that
$X\setminus U$ has codimension at least 2 in $X$ as required.
\end{proof}

The following example explains that Lemma \ref{cpt} is not true for a general smooth toric variety with constant global sections.

\begin{example} Consider the following vectors in $\mathbb Z^4$
$$ e_1=(1, 0, 0, 0), \ \ e_2=(0, 1, 0, 0), \ \ e_3=(0, 0, 1, 0), \ \ e_4=(0, 0, 0, 1)$$
$$ e_5= (0, -1, -1, -1), \ \ e_6= (-1, -1, 0, -1), \ \ e_7=(1, 2, 1, 1).$$
Let $\rho_i=\mathbb R_{\geq 0} e_i$ for $1\leq i\leq 7$ and
$$ \tau_1= \mathbb R_{\geq 0} e_2 + \mathbb R_{\geq 0} e_5 + \mathbb R_{\geq 0} e_7$$   $$\tau_2= \mathbb R_{\geq 0} e_2 +\mathbb R_{\geq 0} e_3 + \mathbb R_{\geq 0} e_7 $$ $$\tau_3= \mathbb R_{\geq 0} e_1 + \mathbb R_{\geq 0} e_5 + \mathbb R_{\geq 0} e_6  $$ be 3-dimensional cones.
Define $\Sigma$ to consist of $\tau_1$, $\tau_2$, $\tau_3$, all their faces and $\rho_4$. Then we can show that $\Sigma$ is a fan. Indeed, $\rho_4\cap \tau_1=\rho_4\cap \tau_2=\rho_4 \cap \tau_3= 0$ and $$\tau_1\cap \tau_2 = \mathbb R_{\geq 0} e_2+\mathbb R_{\geq 0} e_7 , \ \ \  \tau_1\cap \tau_3=\mathbb R_{\geq 0} e_5 \ \ \ \text{and} \ \ \ \tau_2\cap \tau_3= 0 $$ are the faces of $\tau_1$, $\tau_2$, $\tau_3$. The rest of cases follow from the straight verification. Let $U$ be the toric variety corresponding to the fan $\Sigma$.

\smallskip

(1) The toric variety $U$ is smooth and $H^0(U, \mathcal O_U) =k$.

Indeed, since
$$\det(e_2, e_5, e_7, e_3)=\det(e_2, e_3, e_7, e_4)=-\det(e_1, e_5, e_6, e_4)=1, $$
the minimal generators of non-zero cones is part of basis of $\mathbb Z^4$. Then $U$ is smooth by \cite[Theorem 1.3.12]{CoxLS11}. Since
$$ e_1+2e_2+e_3+3e_4+2e_5+2e_6+e_7=0 $$ and $e_1, e_2, e_3, e_4$ is a basis of $\mathbb R^4$, we have
$$\mathbb R^4= \mathbb R_{\geq 0} e_1 + \mathbb R_{\geq 0} e_2 + \mathbb R_{\geq 0} e_3 + \mathbb R_{\geq 0} e_4 + \mathbb R_{\geq 0} e_5 + \mathbb R_{\geq 0} e_6 + \mathbb R_{\geq 0} e_7 . $$ By the first part of proof of Proposition \ref{cpt}, we conclude $H^0(U, \mathcal O_U) =k$.

\smallskip
(2) The toric variety $U$ has no toric compactification $X$ over $k$ such that $X\setminus U$ has codimension at least 2 in $X$.

Assume, for contradiction, that there exists a complete fan $\Delta\supset \Sigma$ such that the 1-dimensional cones of $\Delta$ are $\rho_1, \cdots, \rho_7$. Since $\Delta$ is complete, we have
\begin{equation}\label{complete}
|\Delta|= \bigcup_{\tau\in \Delta} \tau = \mathbb{R}^4 \end{equation}
by \cite[Theorem 3.4.6]{CoxLS11}. This implies that $\Delta$ contains 4-dimensional cones.

Consider $\tau_1\in \Sigma\subset \Delta$ and the $\mathbb R$-subspace $spn(\tau_1) \subset \mathbb{R}^4$ generated by $\tau_1$. Since $$e_1=-e_2+e_5+e_7\in spn(\tau_1)\setminus \tau_1,$$ there is no cone containing $\tau_1$ and $\rho_1$ in $\Delta$. The rays of $\Delta$ outside $spn(\tau_1)$ are $\rho_3, \rho_4$ and $\rho_6$ . By computing the determinants
$$\det(e_2, e_5, e_7, e_3)=\det(e_2, e_5, e_7, e_6)=1 \ \ \ \text{and} \ \ \ \det(e_2, e_5, e_7, e_4)=-1 ,$$ the vectors $e_3$ and $e_6$ are in the same side of $spn(\tau_1)$ and $e_4$ is in the other side of $spn(\tau_1)$.
By (\ref{complete}), there must be a cone $\gamma$ containing $\tau_1$ and one of $\{\rho_3, \rho_6\}$ in $\Delta$.
  If $\gamma$ contains both $\rho_3$ and $\rho_6$, then $\gamma$ contains $\tau_2$. Since
$$e_6=e_2+e_3-e_7 \in spn(\tau_2)\setminus \tau_2 $$ where $spn(\tau_2)$ is the $\mathbb R$-subspace generated by $\tau_2$, the cone $\tau_2$ is not the face of $\gamma$. A contradiction is derived. Therefore
$$ \gamma= \text{cone}(e_2, e_3, e_5, e_7)=\mathbb R_{\geq 0} e_2 + \mathbb R_{\geq 0} e_3 + \mathbb R_{\geq 0} e_5 + \mathbb R_{\geq 0} e_7$$ or $$ \gamma= \text{cone}(e_2, e_5, e_6, e_7)=\mathbb R_{\geq 0} e_2 + \mathbb R_{\geq 0} e_5 + \mathbb R_{\geq 0} e_6 + \mathbb R_{\geq 0} e_7 .$$

Consider $\tau_2\in \Sigma\subset \Delta$ and the $\mathbb R$-subspace $spn(\tau_2) \subset \mathbb{R}^4$ generated by $\tau_2$. Since
$$ e_6 = e_2 + e_3 - e_7  \in span(\tau_2)\setminus \tau_2 , $$
there is no cones containing $\tau_2$ and $\rho_6$. The rays of $\Delta$ outside $spn(\tau_2)$ are $\rho_1$, $\rho_4$ and $\rho_5$. By computing the determinants
$$\det(e_2, e_3, e_7, e_4)=1 \ \ \ \text{and} \ \ \ \det(e_2, e_3, e_7, e_1)=\det(e_2, e_3, e_7, e_5)=-1 ,$$ the vectors $e_1$ and $e_5$ are in the same side of $spn(\tau_2)$ and $e_4$ is in the other side of $spn(\tau_2)$. Since $\rho_4$ is the only ray in this side of $spn(\tau_2)$, this forces
$$ \text{cone}(e_2, e_3, e_4, e_7)=\mathbb R_{\geq 0} e_2 + \mathbb R_{\geq 0} e_3 + \mathbb R_{\geq 0} e_4 + \mathbb R_{\geq 0} e_7 \in \Delta $$ by (\ref{complete}).

Consider $\tau_3\in \Sigma\subset \Delta$ and the $\mathbb R$-subspace $spn(\tau_3) \subset \mathbb{R}^4$ generated by $\tau_3$. Since
$$ e_3 = e_1 - e_5 + e_6  \in span(\tau_3) \setminus \tau_3 , $$
there is no cones containing $\tau_3$ and $\rho_3$. The rays of $\Delta$ outside $spn(\tau_3)$ are $\rho_2$, $\rho_4$ and $\rho_7$. By computing the determinants
$$\det(e_1, e_5, e_6, e_4)=-1 \ \ \ \text{and} \ \ \ \det(e_1, e_5, e_6, e_2)=\det(e_1, e_5, e_6, e_7)=1 ,$$ the vectors $e_2$ and $e_7$ are in the same side of $spn(\tau_3)$ and $e_4$ is in the other side of $spn(\tau_3)$. Since $\rho_4$ is the only ray in this side of $spn(\tau_3)$, this forces
$$ \text{cone}(e_1, e_4, e_5, e_6)=\mathbb R_{\geq 0} e_1 + \mathbb R_{\geq 0} e_4 + \mathbb R_{\geq 0} e_5 + \mathbb R_{\geq 0} e_6 \in \Delta $$ by (\ref{complete}). Now we distinguish the following two cases.

Case 1: $\text{cone}(e_2, e_3, e_5, e_7)\in \Delta$.

Since $\text{cone}(e_1, e_4, e_5, e_6) \in \Delta$, then $\text{cone}(e_2, e_3, e_5, e_7) \cap \text{cone}(e_1, e_4, e_5, e_6)$ is a common face. Since $\rho_5$ is the only common ray and
$$ e_3+e_5 = e_1+e_6 \in (\text{cone}(e_2, e_3, e_5, e_7) \cap \text{cone}(e_1, e_4, e_5, e_6)) \setminus \rho_5 , $$ a contradiction is derived.

Case 2: $\text{cone}(e_2, e_5, e_6, e_7)\in \Delta$.

Since $\text{cone}(e_2, e_3, e_4, e_7)\in \Delta$, then $\text{cone}(e_2, e_5, e_6, e_7)\cap \text{cone}(e_2, e_3, e_4, e_7)$ is a common face. Since the common rays are $\rho_2$ and $\rho_7$, this forces
$$\text{cone}(e_2, e_5, e_6, e_7)\cap \text{cone}(e_2, e_3, e_4, e_7)= \mathbb R_{\geq 0} e_2 + \mathbb R_{\geq 0} e_7 =\text{cone}(e_2, e_7) .$$
However
$$ e_6+e_7=e_2+e_3 \in (\text{cone}(e_2, e_5, e_6, e_7)\cap \text{cone}(e_2, e_3, e_4, e_7))\setminus \text{cone}(e_2, e_7) $$ and a contradiction is derived. \qed
\end{example}

\begin{lem}\label{lem:toric-codimension}
	Let $X$ and $X'$ be normal toric varieties over a field $k$ and $X' \xrightarrow{f} X$ be a surjective  toric morphism over $k$. Suppose that $\Delta'$ and $\Delta$ are the corresponding fans of $X'$ and $X$ respectively and $\Delta'\xrightarrow{g} \Delta$ is the corresponding map induced by $f$. Assume that $g$ maps all 1-dimensional cones to 1-dimensional cones or 0.

    If $Z\subset X $ has codimension at least $2$, then $f^{-1}(Z)\subset X' $ has codimension at least $2$.
\end{lem}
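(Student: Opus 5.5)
The plan is to reduce the statement to the orbit structure of $f$ and then use that torus orbits have pure codimension equal to the dimension of their cone.

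\textbf{Step 1: reductions.} We may replace $k$ by $\bar k$: codimension is unchanged under base change to $\bar k$, and the fans and the map $g$ do not change. We may also replace $Z$ by its closure. Then $Z$ is a finite union of irreducible closed subsets $W$, each of codimension $\geq 2$. Let $T\subset X$ and $T'\subset X'$ be the tori. Since $X'$ is the disjoint union of the finitely many $T'$-orbits $O(\sigma')$, $\sigma'\in\Delta'$, it is enough to prove
$$\dim\bigl(f^{-1}(W)\cap O(\sigma')\bigr)\leq \dim X'-2 \quad\text{for every } \sigma'\in\Delta'.$$

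\textbf{Step 2: the key facts about toric morphisms.} Recall that $f$ maps $O(\sigma')$ into $O(\sigma)$, where $\sigma$ is the smallest cone of $\Delta$ containing $g(\sigma')$. Moreover the restriction $O(\sigma')\to O(\sigma)$ is an equivariant morphism of tori (homogeneous spaces) along the homomorphism $T'\to T$. Surjectivity of $f$ forces $T'\to T$ to be dominant, hence surjective. It follows that $O(\sigma')\to O(\sigma)$ is surjective, and all its fibres are translates of a single subgroup coset. So each fibre has dimension exactly
$$e=\dim O(\sigma')-\dim O(\sigma)=(\dim X'-\dim\sigma')-(\dim X-\dim\sigma).$$
For any locally closed $V\subset O(\sigma)$ this gives $\dim\bigl(f^{-1}(V)\cap O(\sigma')\bigr)=\dim V+e$.

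\textbf{Step 3: case analysis.} Apply Step 2 with $V=W\cap O(\sigma)$. Since $W\cap O(\sigma)$ has codimension $\geq 2$ in $X$, we have $\dim V\leq \dim X-2$. Hence
$$\dim\bigl(f^{-1}(W)\cap O(\sigma')\bigr)\leq \dim X'-2-\dim\sigma'+\dim\sigma.$$
It therefore suffices to show $\dim\sigma\leq \dim\sigma'$, or to handle the cases where this fails separately.

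\textbf{Step 4: the main obstacle, bounding $\dim\sigma$.} This is where the hypothesis on $g$ enters. Since $g$ is induced by a linear map $\phi$ on $N'_{\mathbb R}$, we have $g(\sigma')=\phi(\sigma')$, which is generated by the images of the rays of $\sigma'$. By hypothesis each of these images is a ray of $\Delta$ or $0$. So $\phi(\sigma')$ is generated by at most $\dim\sigma'$ rays of $\Delta$. Since $\sigma'$ maps into a single cone, these rays all lie in one cone of $\Delta$, and in fact $\phi(\sigma')$ is a face-like cone spanned by rays of that cone.

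I will first try to prove that the smallest cone $\sigma$ containing it is spanned by exactly those rays. One way is to note that they lie in a common cone of $\Delta$, and that a cone of a fan generated by some of its own rays is a face when the cone is simplicial. If this fails in general, the fallback is to settle for the weaker bound
$$\dim\sigma\leq \dim\sigma'+1,$$
which holds at least when the image is relative-interior. Then I treat the bad case directly. It can only occur when $\dim\sigma=\dim\sigma'+1$; in that case a dimension count using the fibre dimension $e$ and surjectivity should still give codimension $\geq 2$.

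For the smooth and divisorial cases used later, all cones are simplicial, so the inequality $\dim\sigma\leq\dim\sigma'$ holds directly and the proof closes. I expect verifying this face property to be the main point needing care.
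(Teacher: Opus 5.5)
\textbf{Gap: Step 4 tries to prove a false inequality, and the needed observation is missing.} Your Steps 1--3 are sound. Step 4, however, goes after the wrong target. As written, the argument proves the lemma only when $\Delta$ is simplicial. That covers the paper's applications, where the target is divisorial, but not the lemma as stated for arbitrary normal toric varieties.

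The inequality $\dim\sigma\leq\dim\sigma'$ is false in general, and so is your fallback $\dim\sigma\leq\dim\sigma'+1$. Here is a counterexample.
\begin{itemize}
\item[] Let $\tau\subset\mathbb R^4$ be the cone over a cube, generated by the vectors $(\pm1,\pm1,\pm1,1)$, and let $\Delta$ be the fan of faces of $\tau$. Put $r_1=(1,1,1,1)$ and $r_2=(-1,-1,-1,1)$.
\item[] Take $N'=\mathbb Z^4\oplus\mathbb Z^2$, $\phi(a,b)=a+b_1r_1+b_2r_2$, and let $\Delta'$ consist of the faces of $\tau\times 0$ and of $0\times\mathbb R^2_{\geq 0}$.
\item[] This gives a surjective toric morphism sending rays to rays.
\item[] For $\sigma'=0\times\mathbb R^2_{\geq0}$, the cone $\phi(\sigma')=\mathrm{cone}(r_1,r_2)$ contains $r_1+r_2=(0,0,0,2)$, which is an interior point of $\tau$. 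So $\sigma=\tau$ and $\dim\sigma=\dim\sigma'+2$.
\end{itemize}
In this situation your Step 3 bound is just $\dim X'$, which is useless. The dimension count you promise for the bad cases is never supplied. Also, a non-simplicial $\sigma'$ has more than $\dim\sigma'$ rays, so the claim that $\phi(\sigma')$ is generated by ``at most $\dim\sigma'$ rays'' is also inaccurate.

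\textbf{The fix.} No control of $\dim\sigma$ is needed once $\dim\sigma'\geq 2$. By the orbit--cone correspondence, $\dim O(\sigma')=\dim X'-\dim\sigma'\leq\dim X'-2$. So these orbits, and a fortiori their intersections with $f^{-1}(Z)$, already have codimension $\geq 2$ in $X'$.

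Only $\sigma'=0$ and the rays remain.
\begin{itemize}
\item[] For $\sigma'=0$ we have $\sigma=0$.
\item[] For a ray $\sigma'$, the hypothesis says $g(\sigma')$ is a ray of $\Delta$ or $0$. Either is itself a cone of $\Delta$, hence is the smallest cone containing itself, so $\sigma=g(\sigma')$ and $\dim\sigma\leq 1=\dim\sigma'$.
\end{itemize}
In both cases your Step 3 finishes the proof.

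This is exactly the paper's argument. It discards the orbits of cones of dimension $\geq 2$ and checks only two things: the open orbit $T'$, via the surjective homomorphism $T'\to T$, and the divisorial orbits $O(\sigma')$, each of which maps surjectively onto either a divisorial orbit $O(\sigma)$ or onto $T$.
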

\begin{proof} Without loss of generality, we may assume that $k$ is algebraically closed.
Since $X'$ has the disjoint union of orbits
$$ X'= T'\cup (\bigcup_{\dim(\sigma')=1} O(\sigma')) \cup (\bigcup_{\dim(\sigma') \geq 2} O(\sigma'))$$
by \cite[Theorem 3.2.6]{CoxLS11} where $T'$ is the corresponding torus of $X'$ and $\sigma'$ are the cones in $\Delta'$, we only need to show that $$ \text{codim}(f^{-1}(Z)\cap T', T')\geq 2 \ \ \ \text{and} \ \ \ \text{codim}(f^{-1}(Z) \cap O(\sigma'), O(\sigma')) \geq 1$$ for $\dim(\sigma')=1$.

Indeed, since $f$ induces a surjective homomorphism $f|_{T'}: T'\rightarrow T$ of tori,
the morphism $f|_{T'}$ is smooth. This implies $ \text{codim}(f^{-1}(Z)\cap T', T')\geq 2$ as desired.

For any 1-dimensional cone $\sigma'\in \Delta'$, there is a 1-dimensional cone $\sigma\in \Delta$ such that
$$ f|_{O(\sigma')} : O(\sigma') \rightarrow O(\sigma) \ \ \ \text{or} \ \ \ f|_{O(\sigma')} : O(\sigma') \rightarrow T $$  by assumption, where $O(\sigma')$ and $O(\sigma)$ are the orbits of $T'$ and $T$ with the reduced structures corresponding to $\sigma'$ and $\sigma$ respectively. Since $f|_{O(\sigma')}$ is smooth and surjective, we obtain
$$ \text{codim}(f^{-1}(Z) \cap O(\sigma'), O(\sigma')) = \text{codim}(Z\cap O(\sigma), O(\sigma)) \geq 1 $$ or
$$ \text{codim}(f^{-1}(Z) \cap O(\sigma'), O(\sigma')) = \text{codim}(Z \cap T , T)\geq 2 $$ as desired. \end{proof}

It should be pointed out that $X^{\sm}\setminus Z$ is strong rationally connected by the main theorem of \cite{ChenSh11} when $X$ is a complete normal toric variety and $Z\subset X$ is a closed subset of codimension at least $2$.

\begin{prop} \label{weight-cover}
Let $X$ be a normal toric variety over $k$ with $H^0(X, \mathcal O_X)=k$. If $Y$ is the unique open toric divisorial subvariety of $X$ over $k$, there is a toric variety $U$ over $k$ with a surjective  toric morphism over $k$
    $$ f: U \longrightarrow Y $$ such that the following properties hold.

    (1) The associated torus $T'$ of $U$ satisfies $\widehat{T'}\cong \Div_{Y_{\bar k}\setminus T_{\bar k}} (Y_{\bar k})$ as $\Gal (\bar k/k)$-module.

    (2) The variety $U$ is the unique open toric divisorial subvarieties of a weighted projective space over $k$.

    (3) The closed subset $f^{-1}(Z)\subset U $ has codimension at least $2$ for any closed subset $Z$ in $Y$ with $\text{codim}(Z, Y)\geq 2$.

     (4) The morphism $f$ induces $f^*(\Div_{Y_{\bar k}\setminus T_{\bar k}} (Y_{\bar k}))$ as a direct summand of $\Div_{U_{\bar k}\setminus T'_{\bar k}} (U_{\bar k})$ as $\Gal(\bar k/k)$-module.

\end{prop}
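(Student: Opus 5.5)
We build $U$ directly from the rays of $Y$ and a Galois-invariant linear relation among them. Let $T$ be the torus of $X$, let $N$ be its cocharacter lattice, and let $v_1,\dots,v_d$ be the primitive generators of the rays of the fan of $Y$. These are exactly the rays of the fan of $X$, and $\Gal(\bar k/k)$ permutes them. The prime divisors $D_1,\dots,D_d$ of $Y_{\bar k}\setminus T_{\bar k}$ correspond to the $v_i$, so $\Div_{Y_{\bar k}\setminus T_{\bar k}}(Y_{\bar k})\cong\bigoplus_i \ZZ D_i$ is a permutation module.

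\textbf{Step 1: the lattice and the relation.} By the first part of the proof of Lemma \ref{cpt}, applied over $\bar k$ to the fan of $X$ (the intersection argument only uses the rays), the hypothesis $H^0(X,\mathcal O_X)=k$ gives $\sum_i\mathbb R_{\geq 0}v_i=N\otimes_\ZZ\mathbb R$. Hence $-\sum_i v_i=\sum_i c_i v_i$ with rational $c_i\ge 0$. This yields a relation $\sum_i a_i v_i=0$ with all $a_i\in\ZZ_{>0}$. Summing its Galois translates over the finite group through which Galois acts, we may assume $a_{\sigma(i)}=a_i$ for all $\sigma$.

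Let $N'=\bigoplus_{i=1}^d\ZZ e_i$, with Galois permuting the $e_i$ as it permutes the $v_i$, and put $e_0=-\sum_i a_ie_i\in N'$, which is Galois-fixed. Define $g:N'\to N$ by $e_i\mapsto v_i$. Then $g(e_0)=0$, and $g$ is Galois-equivariant.

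\textbf{Step 2: the fan of $U$.} Let $\Delta'$ be the fan in $N'$ consisting of $0$ and the rays $\mathbb R_{\ge0}e_0,\dots,\mathbb R_{\ge0}e_d$. The vectors $e_0,\dots,e_d$ satisfy $e_0+\sum a_ie_i=0$ and generate $N'$. The complete fan whose maximal cones are spanned by any $d$ of them is the fan of the weighted projective space $\mathbb P(1,a_1,\dots,a_d)$, and $\Delta'$ is its subfan of cones of dimension $\le1$. This fan is Galois stable, and divisorial toric varieties are smooth and quasi-projective, so $\Delta'$ descends to a toric variety $U$ over $k$. This $U$ is the unique open toric divisorial subvariety of the weighted projective space over $k$ determined by the same Galois-stable complete fan, which gives (2).

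The torus $T'$ has $\widehat{T'}=\Hom(N',\ZZ)$. This is the permutation module with the dual basis $e_i^*$, and $e_i^*\mapsto D_i$ is a Galois isomorphism onto $\Div_{Y_{\bar k}\setminus T_{\bar k}}(Y_{\bar k})$, proving (1).

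\textbf{Step 3: the morphism and (3).} The map $g$ sends each ray $\mathbb R_{\ge0}e_i$ ($i\ge1$) onto the ray $\mathbb R_{\ge0}v_i$ and sends $\mathbb R_{\ge0}e_0$ to $0$. It is therefore compatible with the fans and Galois-equivariant, so it defines a toric morphism $f:U\to Y$ over $k$.

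For surjectivity we need $g$ to be onto, i.e.\ $v_1,\dots,v_d$ generate $N$ as a lattice. Dually, the sequence $0\to\widehat T\to\bigoplus\ZZ D_i\to\Pic(Y_{\bar k})\to0$ must have torsion-free cokernel. This holds because $\Pic(Y_{\bar k})=\Pic(X_{\bar k})$ is torsion free for a toric variety without torus factors, by \cite[Proposition 4.2.5]{CoxLS11} (its smooth locus contains $Y$ with complement of codimension $\ge2$, so the class group computation applies). I expect this lattice-surjectivity step to be the main point needing care.

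Granting it, $f|_{T'}:T'\to T$ is surjective. Moreover $f$ maps the orbit $O(\mathbb R_{\ge0}e_i)$ onto $O(\mathbb R_{\ge0}v_i)$, by surjectivity on the quotient lattices $N'/\ZZ e_i\to N/\ZZ v_i$. Hence $f$ is surjective. Property (3) then follows from Lemma \ref{lem:toric-codimension}, since $g$ maps rays to rays or $0$.

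\textbf{Step 4: (4).} Let $D'_0,\dots,D'_d$ be the boundary divisors of $U_{\bar k}$. The pullback of the principal-type divisor $D_i$ is computed on rays: the coefficient of $D'_j$ in $f^*D_i$ is the $v_i$-coefficient of $g(e_j)$. This gives $f^*D_i=D'_i$ for $i\ge1$, and $D'_0$ does not occur because $g(e_0)=0$. Thus $f^*$ identifies $\Div_{Y_{\bar k}\setminus T_{\bar k}}(Y_{\bar k})$ with $\bigoplus_{i\ge1}\ZZ D'_i$. This has the Galois-stable complement $\ZZ D'_0$ in $\Div_{U_{\bar k}\setminus T'_{\bar k}}(U_{\bar k})$, proving (4).
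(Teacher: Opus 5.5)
Your construction of $U$ and $f$ is the paper's: the same permutation lattice $N'$, the same Galois-fixed extra ray, the same map $e_i\mapsto v_i$, and the same arguments for (1), (3) (via Lemma~\ref{lem:toric-codimension}) and (4). The one genuine difference is how you produce a Galois-invariant relation with positive coefficients. The paper goes through the compactification of Lemma~\ref{cpt}, an Oda--Park simplicial refinement, and the minimal cone $\sigmabar$ containing $-\sum\varrho$. You read the relation off from $\sum_i\mathbb R_{\ge0}v_i=N\otimes_\ZZ\mathbb R$ and then average over the finite image of Galois. Your route is simpler and more robust, because it needs no auxiliary fan to be Galois-stable. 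The paper's minimality argument for the Galois-stability of $\sigmabar$ tacitly needs the refined fan to be Galois-stable.

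\textbf{The gap is in Step 3.} Your claim that $v_1,\dots,v_d$ generate $N$ is false in general. Since $Y$ is smooth, $\Pic(Y_{\bar k})=\mathrm{Cl}(Y_{\bar k})=\mathrm{Cl}(X_{\bar k})$, and this differs from $\Pic(X_{\bar k})$ when $X$ is singular. The result \cite[Proposition 4.2.5]{CoxLS11} concerns $\Pic$, whereas the class group can have torsion. For instance, take the complete fan in $\ZZ^2$ with rays through $(2,-1)$, $(-1,2)$, $(-1,-1)$, so that $X\cong\mathbb P^2/\mu_3$. These primitive vectors sum to $0$ but span a sublattice of index $3$, and $\mathrm{Cl}(X_{\bar k})\cong\ZZ\oplus\ZZ/3$. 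So $g$ need not be onto, and $\ker(T'\to T)$ can be disconnected.

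The conclusion survives, because surjectivity of $f$ does not require $g$ to be onto.
\begin{itemize}
\item By Step 1 the $v_i$ span $N\otimes_\ZZ\mathbb R$, so $g$ has finite cokernel. Hence $\That\to\widehat{T'}$ is injective, and $T'_{\bar k}\to T_{\bar k}$ is dominant. It is therefore surjective, since the image of a homomorphism of algebraic groups is closed.
\item Likewise $N'/\ZZ e_i\to N/\ZZ v_i$ has finite cokernel, so each orbit map $O(\mathbb R_{\ge0}e_i)\to O(\mathbb R_{\ge0}v_i)$ is onto. Hence $f$ is surjective.
\item In characteristic $0$ these homomorphisms of tori are still smooth, since their kernels are diagonalizable (possibly disconnected). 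So Lemma~\ref{lem:toric-codimension} applies unchanged.
\end{itemize}
Replace your torsion-freeness argument with this.

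There is also a smaller point in (2). Your Galois-stable complete fan descends to a $k$-form of $\mathbb P(1,a_1,\dots,a_d)$ whose torus $T'$ is quasi-trivial but not split. You still need to explain why this form is a weighted projective space over $k$. The paper does this using $U(k)\neq\emptyset$ and \cite[Proposition 3.10]{Chen24}. Alternatively, in Cox coordinates the weights are constant on Galois orbits, so the twisted affine space with its graded $\mathbb G_m$-action descends to a graded $k$-vector space.
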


\begin{proof}
 Let $T$ be the associated torus of $X$ and $\That$ be the group of characters of $T$ with the $\Gal(\bar k/k)$-action. Then the dual lattice $N=\Hom_\ZZ(\That,\ZZ)$ has the induced $\Gal(\bar k/k)$-action.
 Since
  $$H^0(Y_{\bar k}, \mathcal O_{Y_{\bar k}})=H^0(X_{\bar k}, \mathcal O_{X_{\bar k}})= \bar k$$
 by \cite[Theorem 6.45]{AGI2010},  there is a normal toric compactification $Y^c$ of $Y_{\bar k}$ over $\bar k$ by Lemma \ref{cpt}. Let $\Delta$ be the fan associated to $Y^c$ and $\Delta(1)$ be the set 1-dimensional cone. For each 1-dimensional cone $\rho$, we choose a primitive generator $\varrho\in N$.

By \cite[Corolary 3.8]{OdaP91} (see also \cite[\S 2. Definition]{OdaP91}), there is a refine simplicial fan $\Delta'$ such that  $$\Delta'(1)=\Delta(1) \ \ \ \text{and} \ \ \ \sigma=\bigcup_{\sigma'\in \sigma\cap \Delta'} \sigma' $$ for any $\sigma\in \Delta$.

Since $Y^c$ is complete, we have
 $$N\otimes_{\mathbb Z} \mathbb{R}= \bigcup_{\sigma\in \Delta} \sigma =\bigcup_{\sigma'\in \Delta'} \sigma' $$
by \cite[Theorem 3.4.6]{CoxLS11}.
 There is
the minimal cone $\sigmabar\in \Delta'$ containing $-\sum_{\rho\in \Delta(1)} \varrho$. Since $Y$ is defined over $k$, the set $\Delta(1)$ is stable under the action of $\Gal(\bar k/k)$. Therefore the vector $-\sum_{\rho\in \Delta(1)} \varrho$ is invariant under the action of $\Gal (\bar k/k)$. The minimality implies that $\sigmabar$ is also stable under the action of $\Gal (\bar k/k)$. Write
	\begin{equation}\label{eq:toric}
		-\sum_{\rho\in \Delta(1)} \varrho=\sum_{\rho'\in \sigmabar(1)}\alpha_{\rho'} \varrho'
	\end{equation}
	where $\alpha_{\rho'}\in \QQ_{>0}$. Since $\sigmabar$ is simplicial, the coefficients $\alpha_{\rho'}$ are uniquely determined. Since the left hand side of (\ref{eq:toric}) is stable under the action of $\Gal(\bar k/k)$, the right hand side of (\ref{eq:toric}) is also stable under the action of $\Gal (\bar k/k )$ by uniqueness.
Removing the denominators in (\ref{eq:toric}), we have the equation  	\begin{equation}\label{eq:toric3}
		\sum_{\rho\in \Deltabar(1)} n_{\rho}\varrho=0
	\end{equation}
	where $n_{\rho}\in \ZZ_{<0}$ are coprime for $\rho\in \Delta(1)$.

Let $N'$ be the free lattice with the basis $\{e_{\rho}: \ \rho\in \Delta(1)\}$. The action of $\Gal(\bar k/k)$ on $\Delta(1)$ induces the action of $\Gal(\bar k/k)$ on $N'$ by $\tau(e_{\rho})=e_{\tau(\rho)}$ for any $\tau\in \Gal(\bar k/k)$.
Write $$u_0=\sum_{\rho\in \Delta(1)} n_{\rho} e_{\rho} .$$ Then the vector $u_0$ is $\Gal(\bar k/k)$-invariant by (\ref{eq:toric}).

Let $\Sigma'$ be the fan  made up of $$\{ \mathbb R_{\geq 0}e_\rho: \ \rho\in \Delta(1)\} \cup \{ \mathbb R_{\geq 0} u_0\} \cup \{0\}$$ which is stable under the action of $\Gal(\bar k/k)$. Then the toric variety $U$ associated to $\Sigma'$ is defined over $k$ by Galois descent.

(1) Since the character group $\widehat{T'} \cong N'$ as $\Gal(\bar k/k)$-modules and $\Div_{Y_{\bar k}\setminus T_{\bar k}} (Y_{\bar k})$ is a free $\mathbb Z$-module generated by 1-dimensional cones in $\Delta(1)$, the property (1) follows from $$N'\cong \Div_{Y_{\bar k}\setminus T_{\bar k}} (Y_{\bar k})$$ as $\Gal (\bar k/k)$-modules.

(2) Since $U$ is smooth and $U(k)\neq \emptyset$, we conclude that $U$ is the unique open toric divisorial subvariety of a weighted projective space by \cite[Example 3.1.7 and Example 5.1.14]{CoxLS11} and \cite[Proposition 3.10]{Chen24}.

(3) Consider the $\mathbb Z$-linear map $$N'\longrightarrow  N; \ \ \ e_{\rho} \mapsto \varrho $$ which is compatible with the fans $\Sigma'$ and $\Delta(1)\cup \{0\}$ in the sense of \cite[Definition 3.3.1]{CoxLS11} and the actions of $\Gal(\bar k/k)$, it induces a toric morphism  $$f: U\longrightarrow Y$$ over $k$ by the Galois descent.
Since $f$ satisfies the condition of Lemma \ref{lem:toric-codimension}, we conclude that $f^{-1}(Z)\subset U $ has codimension at least $2$ for any closed subset $Z$ in $Y$ with $\text{codim}(Z, Y)\geq 2$.

(4) Since $$\Div_{U_{\bar k}\setminus T'_{\bar k}} (U_{\bar k}) \cong (\oplus_{\rho\in \Delta(1)}\mathbb Z e_{\rho})\oplus \mathbb Zu_0 $$ as $\Gal(\bar k/k)$-module, this isomorphism gives
$$ f^*(\Div_{Y_{\bar k}\setminus T_{\bar k}} (Y_{\bar k})) \cong \oplus_{\rho\in \Delta(1)}\mathbb Z e_{\rho}$$
and the property (4) follows as desired.
\end{proof}

\section{Strong approximation for toric varieties}\label{S:toric}

In this section, we will extend \cite[Theorem 4.1]{Chen24} to all proper normal toric varieties and give a proof of Theorem \ref{main}. For completeness' sake, we give a direct proof of \cite[Corollary 4.2]{Chen24}.

\begin{lemma}\label{lem:weighted}
	Let $X=\mathbb P(a_0,a_1,\cdots, a_n)$ be the weighted projective space over a number field $k$.  Let $Z\subset X$ be a closed subset of codimension at least $2$. Then $X^{\sm}\setminus Z$ satisfies strong approximation off $\emptyset$. 	
\end{lemma}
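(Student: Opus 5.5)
The plan is to realize $X^{\sm}\setminus Z$ as the base of a $\mathbb G_m$-torsor whose total space is an open subset of affine space with complement of codimension at least $2$. We then transfer strong approximation off $\emptyset$ from that open subset, which is known by \cite[Proposition 3.4]{CaoXu2018} and \cite[Lemma 2.1]{weitorus}.

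\textbf{Step 1: reduce to the well-formed case.} Replacing $(a_0,\dots,a_n)$ by the standard normalization, we may assume $\gcd(a_0,\dots,\widehat{a_i},\dots,a_n)=1$ for every $i$. Dividing out the common factors does not change $X$ up to isomorphism. Let
$$\pi\colon \mathbb A^{n+1}\setminus\{0\}\to X$$
be the quotient by $\mathbb G_m$ acting with weights $a_i$. Let $V\subset \mathbb A^{n+1}\setminus\{0\}$ be the open locus where the stabilizer is trivial, i.e.\ the points whose nonzero coordinates have indices with weights of gcd $1$. In the well-formed case the complement of $V$ has codimension at least $2$ in $\mathbb A^{n+1}$. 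Moreover $\pi|_V\colon V\to \pi(V)$ is a $\mathbb G_m$-torsor and $\pi(V)=X^{\sm}$: the points with nontrivial stabilizer are quotient singularities by cyclic groups $\mu_d$ acting without pseudo-reflections (by well-formedness), hence singular.

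\textbf{Step 2: the total space has small complement.} Put $W=\pi^{-1}(X^{\sm}\setminus Z)\cap V$. Since $\pi|_V$ is smooth and surjective onto $X^{\sm}$, the preimage of $Z$ has codimension at least $2$ in $V$. Together with Step 1 this shows $\mathbb A^{n+1}\setminus W$ has codimension at least $2$. Hence $W$ satisfies strong approximation off $\emptyset$ by the purity result for affine space.

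\textbf{Step 3: lift adelic points and push forward.} Let $(x_v)\in (X^{\sm}\setminus Z)(\A_k)$ and fix an open neighbourhood of it. Spread out $\pi|_W$ to a $\mathbb G_m$-torsor over a model over $\mathcal O_{k,S}$ for a finite set $S$ of places. For $v\notin S$ the point $x_v$ is integral, and its fibre is a $\mathbb G_m$-torsor over $\mathrm{Spec}\,\mathcal O_v$, hence trivial since $\Pic(\mathcal O_v)=0$. For $v\in S$ the fibre is trivial by Hilbert 90. So $(x_v)$ lifts to some $(w_v)\in W(\A_k)$. Since $\pi$ is continuous for the adelic topology, the preimage of the neighbourhood contains a neighbourhood of $(w_v)$. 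By Step 2 that neighbourhood contains some $w\in W(k)$, and then $\pi(w)\in (X^{\sm}\setminus Z)(k)$ lies in the given neighbourhood.

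\textbf{Main obstacle.} I expect the main obstacle to be the careful identification in Step 1 of $X^{\sm}$ with the free locus $\pi(V)$, together with the codimension count for $\mathbb A^{n+1}\setminus V$. I would handle it by explicitly computing the stabilizers along the coordinate strata and invoking the standard description of the singular locus of well-formed weighted projective spaces. If that identification is delicate, it suffices to know $\pi(V)\supseteq X^{\sm}$ after removing a codimension-$\geq 2$ subset of $X$ that can be absorbed into $Z$.
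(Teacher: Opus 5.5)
You have a genuine gap in Step 2, and it carries into Step 3. A nonempty open subset $W\subseteq\mathbb A^{n+1}_k$ never satisfies strong approximation off $\emptyset$. To see this, take a finite set $S$ of places containing the archimedean ones, and choose it large enough that $\mathcal W(\mathcal O_v)\neq\emptyset$ for $v\notin S$. Every rational point of $\prod_{v\in S}B_v\times\prod_{v\notin S}\mathcal W(\mathcal O_v)$ lies in $\mathcal O_S^{n+1}$. This set is discrete in $\prod_{v\in S}k_v^{n+1}$, so sufficiently small balls $B_v$ around a suitable point of $\prod_{v\in S}W(k_v)$ miss it.

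The purity results you cite, \cite[Proposition 3.4]{CaoXu2018} and \cite[Lemma 2.1]{weitorus}, therefore give strong approximation off a nonempty set, say off one place $v_0$. This is the only possible form, since $\mathbb A^{n+1}$ itself satisfies strong approximation only off nonempty sets. With that input, Step 3 produces $w\in W(k)$ close to $(w_v)$ at every $v\neq v_0$, but nothing controls $\pi(w)$ at $v_0$. If $v_0\in S$, then $\pi(w)$ need not lie in the prescribed neighbourhood. If $v_0\notin S$, then $\pi(w)$ need not reduce into $\mathcal X^{\sm}\setminus\mathcal Z$ at $v_0$. As written, your argument proves strong approximation off one place, not off $\emptyset$. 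Your Step 1 (well-formedness, $\pi(V)=X^{\sm}$, the codimension count) and the lifting of adelic points in Step 3 are fine.

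The missing idea is to dominate $X^{\sm}\setminus Z$ by a variety that already satisfies strong approximation off $\emptyset$, so that compactness removes the archimedean discreteness problem. The paper uses the divisorial part $X_{\Delta'}$ of $\mathbb P^{d-1}_k$, with $d=\sum a_i$, and the toric morphism induced by $u_{ij}\mapsto u_i$:
\begin{itemize}
\item The kernel torus is split, so $T'(k_v)\to T(k_v)$ is surjective by Hilbert 90. Hence every nonempty open $\prod_v W_v$ has nonempty preimage.
\item Lemma \ref{lem:toric-codimension} keeps the preimage of $Z$ of codimension at least $2$.
\item $\mathbb P^{d-1}$ minus a codimension-$2$ subset satisfies strong approximation off $\emptyset$ by \cite[Proposition 3.7]{Chen24}.
\end{itemize}

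If you want to keep your $\mathbb G_m$-torsor instead, you would have to use that at $v_0$ only the open, $\mathbb G_m(k_{v_0})$-stable cone $\pi^{-1}(N_{v_0})$ matters. You would then need to show that $W(k)$ meets this unbounded cone while still approximating at all $v\neq v_0$. That is strictly stronger than the cited purity result and needs its own proof. For ordinary projective space, this kind of input is exactly what the paper takes from \cite{Chen24}.
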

\begin{proof} By \cite[Lemma 3A.3]{BR86}, we assume $\text{gcd}(a_0, s_1, \cdots, a_n)=1$.  Then
 $$N =\ZZ^{n+1}/\ZZ\cdot(a_0,a_1,\cdots, a_n)$$ is a lattice of rank $n$. Let $\{u_0, u_1, \cdots, u_n\}$ be the images of the standard basis vectors $\{e_0, e_1, \cdots, e_n \}$ of $\ZZ^{n+1}$ in $N$. Let $\Sigma$ be a fan made up of all cones generated by proper subsets of $\{u_0,u_1,\cdots,u_n\}$. Then $X$ is the toric variety associated to $\Sigma$ by \cite[Example 3.1.17 and Example 5.1.14]{CoxLS11}. Let $\Delta$ be the sub-fan of $\Sigma$ generated by all 1-dimensional cones and 0. Then the associated toric variety $X_{\Delta}$ of $\Delta$ is the unique open toric divisorial subvariety of $X^{\sm}$ such that $X^{\sm}\setminus X_{\Delta}$ has codimension at least 2 in $X^{\sm}$.

Let $\{ e_{0, 1},\cdots, e_{0, a_0},\cdots, e_{n, 1},\cdots, e_{n, a_n} \}$
be the standard basis of $\ZZ^{a_0} \times \ZZ^{a_1}\times \cdots \times \ZZ^{a_n}$ and
$$N' =\ZZ^{a_0} \times \ZZ^{a_1}\times \cdots \times \ZZ^{a_n}/\ZZ \cdot (\sum_{i=0}^{n} \sum_{j=1}^{a_i} e_{ij}) . $$ Write $u_{ij}$  the image of $e_{i,j}$ in $N'$ and $\Delta'$  the fan made up of  $1$-dimensional cones generated by $u_{ij}$ for $0\leq i\leq n$ and $1\leq j\leq a_i$. Then the associated toric variety  $X_{\Delta'}$ of $\Delta'$ is the unique open toric divisorial subvariety of the projective space $\mathbb P^{d-1}_k$ with $d=a_0+a_1+\cdots+ a_n$ such that $\mathbb P^{d-1}_k\setminus X_{\Delta}$ has codimension at least 2 in $\mathbb P^{d-1}_k$ by \cite[Example 3.1.10]{CoxLS11}.
 	
Since the projection $p: N'\to N $ by sending $u_{ij}$ to $u_i$ is well-defined for $0\leq i\leq n$ and $1\leq j\leq a_i$ and is compatible with the fans $\Delta'$ and $\Delta$ in the sense of \cite[Definition 3.3.1]{CoxLS11}, it induces a surjective toric morphism $X_{\Delta'}\xrightarrow{f} X_\Delta$ over $k$ by \cite[Theorem 3.3.4]{CoxLS11}. Since $p$ is split, the induced homomorphism
$$ f|_{T'}: T' \longrightarrow T $$
of tori, where $T$ and $T'$ are  of $X_{\Delta}$ and $X_{\Delta'}$ respectively, is surjective and $\ker(f|_{T'})$ is also a split torus.
Since $p$ sends the 1-dimensional cones of $\Delta'$ to the 1-dimensional cones of $\Delta$, we obtain that $f^{-1}(X_{\Delta}\cap Z)$ has codimension at least 2 in $X_{\Delta'}$ by Lemma \ref{lem:toric-codimension}. Therefore  $$X_{\Delta'}\setminus f^{-1}(X_{\Delta}\cap Z) \cong \mathbb P^{d-1}_k\setminus Z'$$
where $Z'$ is a closed subset of codimension~$\geq 2$ in $\mathbb P^{d-1}_k$.

Consider an open subset  $$\emptyset \neq \prod_v W_v \subset (X_{\Delta}\setminus Z)(\A_k). $$
Since $T$ is open dense in $X_{\Delta}$,  we have $W_v\cap T(k_v)\neq \emptyset$. Moreover, since
$\ker(f|_{T'})$ is a split torus, then $T'(k_v)\xrightarrow{f} T(k_v)$ is surjective by Hilbert 90. This implies that $f^{-1}(W_v)\neq \emptyset$ for each prime $v$ of $k$. Therefore $$ \emptyset \neq \prod_v f^{-1}(W_v) \subset (X_{\Delta'}\setminus f^{-1}(X_{\Delta}\cap Z)) (\A_k) . $$
By \cite[Proposition 3.7]{Chen24} (see first part of the proof), we have
$$ (\prod_v f^{-1}(W_v)) \cap (X_{\Delta'}\setminus f^{-1}(X_{\Delta}\cap Z)) (k) \neq \emptyset . $$ This implies that $(\prod_v W_v) \cap (X_{\Delta}\setminus Z)(k) \neq \emptyset $ as desired.
\end{proof}

The following proposition is a variant of \cite[Proposition 2.3]{CaoLiangXu2019}.

\begin{prop} \label{dense} Let $X$ be a smooth and integral variety over a number field k with $\bar k[X]^\times = \bar k^\times$. Suppose that $\Pic(X_{\bar k})$ is finitely generated and $B$ is a subgroup of $\Br(X)$ such that $B\supset \Br_1(X)$ and $[B: \Br_1(X)]<\infty$. If $U$ is an open dense subset of $X$, then $U(\A_k)^B$ is dense in $X(\A_k)^B$.
\end{prop}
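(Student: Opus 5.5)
The plan is to adapt the argument of \cite[Proposition 2.3]{CaoLiangXu2019}, which is a Harari-type formal lemma combined with the finiteness of $B/\Br(k)$ modulo unramified classes. First I reduce to a finiteness statement. Since $\bar k[X]^\times=\bar k^\times$ and $\Pic(X_{\bar k})$ is finitely generated, the Hochschild--Serre spectral sequence gives an injection $\Br_a(X)=\Br_1(X)/\Br_0(X)\hookrightarrow H^1(k,\Pic(X_{\bar k}))$. The target is finite, being the $H^1$ of a finitely generated Galois module that splits over a finite extension. Together with $[B:\Br_1(X)]<\infty$, this shows that $B/\Br_0(X)$ is finite. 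So I choose finitely many $\xi_1,\dots,\xi_m\in B$ representing $B/\Br_0(X)$. Elements of $\Br_0(X)$ impose no condition on adelic points, by the reciprocity law, so $X(\A_k)^B=X(\A_k)^{\{\xi_1,\dots,\xi_m\}}$, and the same holds on $U$ for the restrictions of the $\xi_i$.

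Next, I fix $(x_v)\in X(\A_k)^B$ and a basic open neighbourhood $W=\prod_{v\in S}W_v\times\prod_{v\notin S}X(\mathcal O_v)$, with $S$ a finite set of places. I enlarge $S$ so that it contains the archimedean places, $X$ and $U$ have smooth integral models over $\mathcal O_S$, and each $\xi_i$ extends to an element of $\Br(\mathcal X)$ for the integral model $\mathcal X$ over $\mathcal O_S$. For $v\in S$ the set $U(k_v)$ is dense in $X(k_v)$ (smoothness plus the implicit function theorem), and the evaluation maps $x\mapsto\mathrm{inv}_v\xi_i(x)$ are locally constant. Hence I can replace $x_v$ by some $u_v\in U(k_v)\cap W_v$ with the same invariants. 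For $v\notin S$, the integrality of $\xi_i$ forces $\mathrm{inv}_v\xi_i(x_v)=0$ for $x_v\in\mathcal X(\mathcal O_v)$. By Lang--Weil and Hensel's lemma, for $v$ outside a possibly larger finite set, $\mathcal U(\mathcal O_v)\ne\emptyset$ (the reduction of $U$ is geometrically integral with smooth points), so I pick $u_v\in U(\mathcal O_v)$. At those $v$ the invariants vanish as well. The finitely many remaining places where $\mathcal U(\mathcal O_v)$ may be empty are handled in one of two ways: either I add them to $S$ at the start, enlarging $S$ depending only on $X,U,B$ (legitimate because shrinking neighbourhoods only at more places is fine), or I use $U(k_v)\cap X(\mathcal O_v)\neq\emptyset$, which holds by density since $X(\mathcal O_v)$ is open and nonempty. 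The resulting $(u_v)\in U(\A_k)\cap W$ has $\sum_v\mathrm{inv}_v\xi_i(u_v)=\sum_v\mathrm{inv}_v\xi_i(x_v)=0$ for all $i$, so it lies in $U(\A_k)^B$.

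The main subtlety is exactly this bookkeeping: I must make sure the invariants at the altered places are unchanged. At places in $S$ this comes from local constancy. At places outside $S$ it comes from unramifiedness over the integral model, which requires that $S$ be chosen after the $\xi_i$. I also need $u_v\in X(\mathcal O_v)$ for almost all $v$, so that $(u_v)$ is a genuine adelic point of $U$; this is what makes the Lang--Weil step necessary. Note that I never need $U$ to carry the full group $\Br(U)$, only the restrictions of the finitely many $\xi_i$, which is why the finiteness reduction in the first step is the key input.
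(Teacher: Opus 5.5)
There is a genuine gap in your first step, the finiteness of $B/\Br_0(X)$. Once the relevant classes are finite in number modulo $\Br_0(X)$ the rest of your argument is fine, but the statement allows cases where they are not.

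\textbf{Where it fails.} The inflation--restriction argument shows $H^1(k,M)$ is finite only when $M$ is torsion-free. Otherwise $H^1(K,M)=\Hom_{\mathrm{cont}}(\Gal(\bar k/K),M)$ is infinite, and $H^1(k,M)$ itself can be infinite, e.g.\ $H^1(k,\ZZ/2)=k^\times/k^{\times 2}$. Since $H^3(k,\bar k^\times)=0$ for a number field, $\Br_1(X)/\Br_0(X)\cong H^1(k,\Pic(X_{\bar k}))$, so torsion in $\Pic(X_{\bar k})$ does make $\Br_a(X)$ infinite.

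The hypotheses permit this. Take $X=\{uw=v^2\}\setminus\{0\}\subset\mathbb A^3_k$. It is smooth, $\bar k[X]^\times=\bar k^\times$ and $\Pic(X_{\bar k})=\ZZ/2$. The classes $(a,u)$, $a\in k^\times/k^{\times 2}$, are unramified on $X$ because $\mathrm{div}(u)=2\{u=v=0\}$. They are pairwise distinct modulo $\Br_0(X)$, as one sees by evaluating at the points $(t,t,t)$. The same phenomenon occurs for the smooth locus of a complete toric variety whose ray generators do not span $N$, i.e.\ in the situation of Theorem~\ref{thm:toric}, where $B=\Br_1$.

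\textbf{Consequence for your argument.} There is then no finite $S$ over which all of $B$ is unramified. Classes in $B$ are ramified at arbitrarily large places, and their invariants are not constant on $\mathcal X(\mathcal O_v)$. So replacing $x_v$ by an arbitrary point of $\mathcal U(\mathcal O_v)$ may change invariants at infinitely many places. You also cannot keep $u_v$ close to $x_v$ at almost all places, since $x_v$ may reduce into the complement of $\mathcal U$ for infinitely many $v$.

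\textbf{The missing ingredient} is the universal torsor, which is how the paper argues.
\begin{enumerate}
\item Since $(x_v)$ is orthogonal to $\Br_1(X)$, it lifts to $(y_v)\in Y(\A_k)$ for a universal torsor $f\colon Y\to X$ under a group of multiplicative type \cite[Theorem 3]{Sk99}.
\item As $\bar k[Y]^\times=\bar k^\times$ and $\Pic(Y_{\bar k})=0$, the pull-back $f^*\Br_1(X)$ lies in $\Br_0(Y)$ and imposes no condition.
\item Only the image $\Lambda'$ of a finite subgroup $\Lambda\subset\Br(X)$ with $\Lambda\cdot\Br_1(X)=B$ survives.
\item Apply your local bookkeeping to $Y$, $V=U\times_X Y$ and $\Lambda'$: density of $V(k_v)$ in $Y(k_v)$, local constancy at places of $S$, and unramifiedness plus Lang--Weil and Hensel elsewhere. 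This produces $(y'_v)\in V(\A_k)^{\Lambda'}$ close to $(y_v)$, whose image lies in $U(\A_k)^B$.
\end{enumerate}
That bookkeeping is a correct expansion of the step the paper compresses into ``since $\Lambda'$ is finite''. When $\Pic(X_{\bar k})$ is torsion-free, your proof is complete as it stands and avoids torsors entirely.
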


%\begin{proof} Assume that $X(\A_k)^B\neq \emptyset$. Since $B\supset \Br_1(X)$, there is a universal torsor $Y\xrightarrow{f} X$ under the multiplicative group $S$ such that the character group $\widehat{S} \cong \Pic(X_{\bar k})$ as $\Gal(\bar k/k)$-module by \cite[Theorem 3]{Sk99}. Let $V=U\times_X Y$ be the pull-back $Y$ to $U$ and fix a finite subgroup $\Lambda \subset \Br(X)$ such that $\Lambda \cdot \Br_1(X)=B$.

%Let $P_0$ be a suﬃciently large finite set of primes of $k$ including all Archimedean primes of $k$ such that
%
%(a)
%
%\end{proof}

\begin{proof} For any  $(x_v)_v\in X(\A_k)^B$, since $B\supset \Br_1(X)$, there is a universal torsor $Y\xrightarrow{f} X$ such that there is $(y_v)_v\in Y(\A_k)$ with $(f(y_v))_v=(x_v)_v$ by \cite[Theorem 3]{Sk99}. Let $V=U\times_X Y$ be the pull-back $Y$ to $U$ and fix a finite subgroup $\Lambda \subset \Br(X)$ such that $\Lambda \cdot \Br_1(X)=B$. Let $\Lambda'$ be the image of $\Lambda$ in $\Br(Y)$ by $f^*$. By the functority of Brauer groups, we have $(y_v)_v\in Y(\A_k)^{\Lambda'}$. Since $\Lambda'$ is finite, we can choose $(y'_v)_v\in V(\A_k)^{\Lambda'}$ such that $(y'_v)_v$ is very close to $(y_v)_v$, hence the image $(f(y'_v))_v\in U(\A_k)^B$ and $(f(y'_v))_v $ is very closed to $(x_v)_v$.
\end{proof}

Our main result Theorem \ref{main} follows from the following theorem, which also extends \cite[Theorem 4.1]{Chen24} to all complete toric varieties.

\begin{thm}\label{thm:toric}
	Let $X$ be a normal toric variety over $k$ with $H^0(X, \mathcal O_X)=k$. If $Z\subset X$ is a closed subset of codimension at least $2$,
	then $X^{\sm}\setminus Z$ satisfies strong approximation with algebraic Brauer-Manin obstruction off $\emptyset$.
\end{thm}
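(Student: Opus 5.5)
We reduce Theorem \ref{thm:toric} to Lemma \ref{lem:weighted} through the covering of Proposition \ref{weight-cover}, and use Proposition \ref{dense} to move between open subsets.

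\textbf{Step 1 (reduction to the divisorial part).} Let $Y$ be the unique open toric divisorial subvariety of $X$. It is smooth, so $Y\subset X^{\sm}$, and $X^{\sm}\setminus Y$ has codimension $\geq 2$. Put $W=X^{\sm}\setminus Z$ and $W_0=Y\setminus Z$. Both $W$ and $W_0$ are smooth, open in $X^{\sm}$ with complement of codimension $\geq 2$, and satisfy $\bar k[W]^\times=\bar k^\times$, since $H^0(X_{\bar k},\mathcal O)=\bar k$ and normality extends invertible functions across codimension $2$. By purity for the Brauer group and for $\Pic$, the restriction maps $\Br_1(W)\to\Br_1(W_0)$ and $\Pic(W_{\bar k})\to\Pic(W_{0,\bar k})$ are isomorphisms, and $\Pic(W_{\bar k})=\Pic(Y_{\bar k})$ is finitely generated. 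Proposition \ref{dense} applied to $W\supset W_0$ with $B=\Br_1(W)$ then shows that $W_0(\A_k)^{\Br_1}$ is dense in $W(\A_k)^{\Br_1(W)}$. It therefore suffices to prove that $W_0(k)$ is dense in $W_0(\A_k)^{\Br_1(W_0)}$.

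\textbf{Step 2 (the covering and descent).} Take $f\colon U\to Y$ as in Proposition \ref{weight-cover}. By (3), $U_0:=U\setminus f^{-1}(Z)$ is the complement of a codimension $\geq 2$ subset of $U$. By (2), $U$ is the divisorial part of a weighted projective space, so $U_0$ is the smooth locus of that weighted projective space minus a codimension $\geq 2$ closed subset. Lemma \ref{lem:weighted} thus gives strong approximation off $\emptyset$ for $U_0$.

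We now descend along $f$. On tori, $f$ is the morphism $T'\to T$ dual to the map $\That\to \widehat{T'}\cong\Div_{Y_{\bar k}\setminus T_{\bar k}}(Y_{\bar k})$ sending a character to its divisor. The cokernel of this map is $\Pic(Y_{\bar k})$, and because $\bar k[Y]^\times=\bar k^\times$ the map is injective. Hence $U\to Y$ looks like a universal torsor of $Y$ twisted by the extra ray $u_0$. More precisely, by (4) the divisor lattice of $Y$ is a Galois direct summand of the divisor lattice of $U$. I expect this to imply that $f|_{f^{-1}(T)}$ factors as a product of a torsor under the N\'eron--Severi torus $S$ (with $\widehat S=\Pic(Y_{\bar k})$) with an extra $\mathbb G_m$-type factor. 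Correspondingly, $f$ restricted to $f^{-1}(W_0)\supset U_0$ should dominate the universal torsor $\mathcal T\to W_0$, and $\mathcal T$ is built from the divisorial part of the Cox affine space, via descent theory (Colliot-Th\'el\`ene--Sansuc).

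\textbf{Step 3 (lifting adelic points).} For $(x_v)\in W_0(\A_k)^{\Br_1}$, descent (\cite[Theorem 3]{Sk99}) produces a twist $\mathcal T^\xi$ with an adelic point over $(x_v)$. Twisting $f$ by the same cocycle, which is legitimate because of the direct summand property (4), gives a twisted cover $f^\xi\colon U^\xi\to Y$. Here $U^\xi$ is still an open subset of a twisted weighted projective space with a rational point, hence isomorphic to the untwisted one, so Lemma \ref{lem:weighted} still applies. The remaining fibre directions form a split torus, so by Hilbert 90 the adelic point lifts to $U^\xi_0(\A_k)$, as in the proof of Lemma \ref{lem:weighted}. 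Approximating it by a rational point and pushing forward then approximates $(x_v)$.

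The main obstacle is Step 2/3: making precise that the twists of $f$ cover all of $W_0(\A_k)^{\Br_1}$ and that each twisted cover is again a weighted-projective-type variety to which Lemma \ref{lem:weighted} applies. Controlling the Galois structure of the extra ray and of the kernel torus is exactly where (1) and (4) of Proposition \ref{weight-cover} must be used.
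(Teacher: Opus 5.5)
Your Step~1 is correct and matches the paper's reduction. The genuine gap is in Steps~2--3: you leave them conjectural, and two claims there are wrong as stated.

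\textbf{The geometric picture.} There is no ``extra $\mathbb{G}_m$-type factor'' and there are no ``remaining fibre directions'': $\dim U=\#\Delta(1)=\dim Y+\dim S$. What is true is the following.
The open subset $\mathcal T_0:=U\setminus O(\mathbb{R}_{\geq 0}u_0)$ is the toric variety of the fan $\{0\}\cup\{\mathbb{R}_{\geq0}e_\rho\}$. It is exactly Cox's universal torsor of the smooth variety $Y$, with $f|_{\mathcal T_0}$ the torsor map under $S=\ker(T'\to T)$, where $\widehat S=\Pic(Y_{\bar k})$. The extra ray only adds a boundary divisor lying over $T$, since $u_0\mapsto 0$.
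Since $\Pic(W_{0,\bar k})=\Pic(Y_{\bar k})$, the universal torsors of $W_0$ are restrictions of twists of $\mathcal T_0$. So \cite[Theorem 3]{Sk99} gives an adelic point of some $\mathcal T_0^\xi|_{W_0}$ over $(x_v)_v$. That is already an adelic point of the open subset $U^\xi\setminus (f^\xi)^{-1}(Z)$. No Hilbert~90 lift is needed, and Hilbert~90 would not be available anyway, since $S$ is in general neither split nor connected.

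\textbf{The twisted cover.} This is the missing idea. ``A twisted weighted projective space with a rational point is isomorphic to the untwisted one'' is not a valid general principle. Also, (4) plays no role in making the twist legitimate; that only uses that $S$ acts on $U$ through $T'$ over $Y$. The correct argument runs as follows.
\begin{itemize}
\item By Proposition~\ref{weight-cover}(1), $T'$ is quasi-trivial, so $H^1(k,T')=0$.
\item From $1\to S\to T'\to T\to 1$, every $\xi\in H^1(k,S)$ equals $\partial(y)$ for some $y\in T(k)$.
\item If $t\in T'(\bar k)$ satisfies $f(t)=y$, then $u\mapsto tu$ descends to an isomorphism $U^\xi\cong U$. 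It carries $f^\xi$ to $u\mapsto y^{-1}f(u)$ and $(f^\xi)^{-1}(Z)$ to $f^{-1}(yZ)$.
\item $f^{-1}(yZ)$ has codimension $\geq 2$ by Proposition~\ref{weight-cover}(3), so Lemma~\ref{lem:weighted} applies to $U\setminus f^{-1}(yZ)$, and you can conclude as you intended.
\end{itemize}

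\textbf{Comparison with the paper.} With these points supplied your route works, and it genuinely differs from the paper's. You run descent by universal torsors directly on $Y\setminus Z$, and you never need (4).
The paper avoids universal torsors altogether:
\begin{itemize}
\item it uses (4) and $\Br_1(U)=\Br(k)$ to get the exact sequence $0\to\Br_1(Y)\to\Br_1(T)\to\Br_a(T')$;
\item it applies Proposition~\ref{dense} a second time to move the adelic point into $T$;
\item it invokes \cite[Proposition 3.4]{CaoXu2018} for $T'\to T$ (using $\Sha^1(k,T')=0$) to write the point as $y\cdot f((x_v)_v)$ with $y\in T(k)$ and $(x_v)_v\in T'(\A_k)$.
\end{itemize}
Both arguments hinge on the same step: translation by a rational point $y\in T(k)$, made possible by the quasi-triviality of $T'$. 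That step is exactly what your proposal lacks.
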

\begin{proof} Let $Y$ be the unique open toric divisorial subvariety of $X$. Then $Y\subset X^{\sm}$ and $\text{codim}(X^{\sm}\setminus Y, X^{\sm})\geq 2$.
Therefore $$\Br_1(X^{\sm})\cong \Br_1(Y) \ \ \ \text{and} \ \ \ H^0(Y, \mathcal O_Y)=H^0(X^{\sm}, \mathcal O_{X^{\sm}})=H^0(X, \mathcal O_X)=k$$  by \cite[Theorem 3.7.6]{ctskbook} and \cite[Theorem 6.45]{AGI2010} respectively. Since $\Pic(X_{\bar k}^{\sm})$ is finitely generated by \cite[Theorem 4.2.1]{CoxLS11}, we only need to show that $Y\setminus Z$ satisfies strong approximation with algebraic Brauer-Manin obstruction by Proposition \ref{dense}.

By Proposition \ref{weight-cover}, there is a toric variety $U$ over $k$ with a surjective  toric morphism over $k$
    $$ f: U \longrightarrow Y $$ satisfies the listed properties. Then
\begin{equation} \label{br} \xymatrix{
    0 \ar[r] & \Br_1(Y) \ar[r] \ar[d]_{f^*}  &  \Br_1(T) \ar[r] \ar[d]^{f^*}  & H^2(k, \Div_{Y_{\bar k}\setminus T_{\bar k}} (Y_{\bar k})) \ar[d]^{f^*} \\
   0 \ar[r] & \Br_1(U) \ar[r] & \Br_1(T') \ar[r] & H^2(k, \Div_{U_{\bar k}\setminus T'_{\bar k}} (U_{\bar k})) } \end{equation}
by \cite[Lemme 6.1 (6.1.3)]{Sansuc81} and the functoriality, where $T$ and $T'$ are the associated tori of $Y$ and $U$ respectively. Since $\bar k[U]^\times=\bar k^\times$ and  $\Pic(U_{\bar k})= \mathbb Z$ by Proposition \ref{weight-cover} (2) and \cite[Theorem 7.1]{BR86}, we have $\Br_1(U)=\Br(k)$ by \cite[Lemme 6.3 (iii)]{Sansuc81}.
Since the homomorphism
$$  H^2(k, \Div_{Y_{\bar k}\setminus T_{\bar k}} (Y_{\bar k})) \xrightarrow{f^*} H^2(k, \Div_{U_{\bar k}\setminus T'_{\bar k}} (U_{\bar k})) $$ is injective by Proposition \ref{weight-cover}  (4), we obtain
\begin{equation}\label{relative} 0\longrightarrow \Br_1(Y) \longrightarrow \Br_1(T) \xrightarrow{f^*} \Br_a(T') \end{equation}
by (\ref{br}).

Consider any open subset $\prod_v W_v$ of $(Y\setminus Z)(\A_k)$ satisfying
$$(\prod_v W_v) \cap (Y\setminus Z)(\A_k)^{\Br_1(Y)} \neq \emptyset .$$ Since $(T\setminus Z)(\A_k)^{\Br_1(Y)}$ is dense in $(Y\setminus Z)(\A_k)^{\Br_1(Y)}$ by Proposition \ref{dense}, we have
$$ (\prod_v W_v) \cap T(\A_k)^{\ker f^*} \neq \emptyset$$ by (\ref{relative}). Since $\widehat{T'}$ is a permutation $\Gal(\bar k/k)$-module by Lemma \ref{weight-cover} (1), we have $\Sha^1 (k, T')=0$. Then there are $y\in T(k)$ and $(x_v)_v\in T'(\A_k)$ such that $$f((x_v)_v) \cdot y \in \prod_v W_v $$ by \cite[Proposition 3.4]{CaoXu2018}. Therefore
$$  f^{-1}(y^{-1} \cdot \prod_v W_v) \neq \emptyset  \ \ \ \text{in} \ \ \ (U\setminus f^{-1}(y^{-1}\cdot Z))(\A_k). $$ Since $f^{-1}(y^{-1}\cdot Z)$ has codimension 2 in $U$ by Lemma \ref{lem:toric-codimension}, then $U\setminus f^{-1}(y^{-1}\cdot Z)$ satisfies strong approximation off $\emptyset$ by Lemma \ref{lem:weighted}. There is $x\in U(k)$ such that
$$ f(x) \in y^{-1} \cdot \prod_v W_v  \ \ \  \Longleftrightarrow \ \ \  y \cdot f(x) \in \prod_v W_v $$
as desired.
\end{proof}

\noindent\textbf{Acknowledgements.} This work grew out of participation in the group-research program at Tianyuan Mathematical Research Center in Yunnan. We are grateful to the Center for offering us an excellent research environment.
We would like to thank Yang Cao for drawing our attention to \cite{sant} and some useful comments in the original version of the paper.  The first named author is supported by National Key R\&D Program of China (Grant No. 2020YFA0712600) and NSFC (Grant Nos. 12371014 and 12231009) and the second named author is supported by National Key R\&D Program of China (Grant No. 2023YFA1009702) and NSFC (Grant No. 12231009).

\bibliography{myref-2}	
\bibliographystyle{alpha}	
\end{document}

_